\newtheorem{theorem}{Theorem}
\newtheorem{lemma}[theorem]{Lemma}
\newtheorem{corollary}[theorem]{Corollary}
\newcommand{\N}{\mathbb N}
\newcommand{\Q}{\mathbb Q}
\begin{document}
\title{A combinatorial proof of the Removal Lemma for Groups}
\author{Daniel Kr{\'a}l'\thanks{%
        Institute for Theoretical Computer Science (ITI),
        Faculty of Mathematics and Physics, Charles University,
        Malostransk\'e n\'am\v{e}st\'{\i}~25, 118~00 Prague, Czech
        Republic. E-mail: {\tt kral@kam.mff.cuni.cz}. Institute for
        Theoretical computer science is supported as project 1M0545
        by Czech Ministry of Education.}
        \and
        Oriol Serra \thanks{%
        Departament de Matem\`atica Aplicada IV,
        Universitat Polit\`ecnica de Catalunya. E-mail: {\tt oserra@ma4.upc.edu}.   Supported by the Catalan Research Council
        under project 2005SGR0258.}
        \and
    Llu\'{i}s Vena \thanks{%
        Departament de Matem\`atica Aplicada IV,
        Universitat Polit\`ecnica de Catalunya. E-mail: {\tt lvena@ma4.upc.edu}. Supported by the
        Spanish Research Council under project MTM2005-08990-C01-C02.}
        }
\date{}

\maketitle

\begin{abstract}
Green~[Geometric and Functional Analysis 15 (2005), 340--376]
established a version of the Szemer\'edi Regularity Lemma for
abelian groups and derived the Removal Lemma for abelian groups as
its corollary. We provide another proof of his Removal Lemma that
allows us to extend its statement to all finite groups. We also
discuss possible extensions of the Removal Lemma to systems of equations.
\end{abstract}

\section{Introduction}

A consequence of the celebrated Szemer\'edi Regularity Lemma for
graphs is the so-called Removal Lemma. In its simplest formulation,
the lemma states that a graph with $o(n^3)$ triangles can be
transformed into a triangle-free graph by removing only $o(n^2)$
edges. Green~\cite{green05} has recently established a version of
the Szemer\'edi Regularity Lemma for abelian groups and derived, as
a consequence of his result, the Removal Lemma for abelian groups:

\begin{theorem}[Green~\cite{green05}, Theorem 1.5]
\label{thm-green} Let $G$ be a finite abelian group of order $N$.
Let $m\geq 3$ be an integer, and suppose that $A_1, \ldots, A_m$ are
subsets of $G$ such that there are $o(N^{m-1})$ solutions to the
equation $a_1+\ldots+a_m=0$ with $a_i\in A_i$ for all $i$. Then,
it is possible to remove $o(N)$ elements from each set $A_i$ so as
to obtain sets $A'_i$ such that there is no solution of the equation
$a_1'+\ldots+a_m'=0$ with $a_i'\in A_i'$ for all $i$.
\end{theorem}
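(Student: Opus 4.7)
The plan is to reduce Theorem~\ref{thm-green} to a graph Removal Lemma by encoding the linear equation as a cycle in an auxiliary graph. Given the sets $A_1,\ldots,A_m\subseteq G$, I would form an $m$-partite graph $H$ whose parts $V_1,\ldots,V_m$ are disjoint copies of $G$, placing an edge between $v_i\in V_i$ and $v_{i+1}\in V_{i+1}$ (indices modulo $m$) exactly when $v_{i+1}-v_i\in A_i$. Choosing $v_1\in V_1$ freely and setting $v_{i+1}=v_i+a_i$, every solution $(a_1,\ldots,a_m)$ of $\sum_i a_i=0$ with $a_i\in A_i$ lifts to exactly $N$ cycles of $H$ that visit each part once, and conversely any such cycle comes from a solution. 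Hence the hypothesis of $o(N^{m-1})$ solutions translates into $o(N^m)$ copies of $C_m$ in $H$ meeting every class.

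The next step is to apply the Removal Lemma for $C_m$ in its multipartite form: if an $m$-partite graph with classes of size $N$ contains only $o(N^m)$ cycles that hit every class, then $o(N^2)$ edges can be deleted to destroy all such cycles. Let $H'$ be the resulting subgraph of $H$. To extract the sets $A_i'$, observe that for each $a\in A_i$ the edges encoding $a$ form a perfect matching $M_a$ between $V_i$ and $V_{i+1}$, and the $|A_i|$ matchings obtained as $a$ varies over $A_i$ are pairwise disjoint. Fix a constant $\delta<1/m$, and let $A_i'$ consist of those $a\in A_i$ for which fewer than $\delta N$ edges of $M_a$ have been deleted in the passage from $H$ to $H'$. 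Since only $o(N^2)$ edges are deleted in total, a direct averaging argument yields $|A_i\setminus A_i'|=o(N)$ for every $i$.

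To finish, suppose for contradiction that there is a solution $(a_1',\ldots,a_m')$ with $a_i'\in A_i'$. For each $v_1\in V_1$ the sequence $v_1,\, v_1+a_1',\, \ldots,\, v_1+\sum_{j<m}a_j'$ closes into an $m$-cycle in $H$, and its $i$-th edge belongs to $M_{a_i'}$. The map $v_1\mapsto v_1+\sum_{j<i}a_j'$ identifies the values of $v_1$ producing a deleted $i$-th edge with the deleted edges of $M_{a_i'}$, so there are at most $\delta N$ bad $v_1$'s for each fixed $i$ and at most $m\delta N<N$ in total. Some $v_1$ therefore yields a cycle surviving in $H'$, contradicting the removal step. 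The same argument carries over to an arbitrary finite group on replacing addition by multiplication and setting $v_{i+1}=v_ia_i$, so that the closure condition becomes $a_1\cdots a_m=1$; this is the promised extension to the non-abelian case.

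The main obstacle is the precise form of the Removal Lemma to invoke in the second step. For $m=3$ the graph $H$ is tripartite, every triangle automatically visits all three classes, and the classical Triangle Removal Lemma applies directly. For $m\geq 4$, however, $H$ may contain up to $\Theta(N^4)$ short (degenerate) $C_m$'s lying entirely between two neighbouring classes, so the plain Removal Lemma for $C_m$ is inapplicable; one has to use its multipartite/coloured form, belonging to the Erd\H{o}s--Frankl--R\"odl family of removal statements for fixed subgraphs and ultimately derived from the Szemer\'edi Regularity Lemma. With that tool granted, the passage from the abelian to the non-abelian setting is cost-free.
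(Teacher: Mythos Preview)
Your proposal is correct and follows essentially the same line as the paper: encode solutions as transversal $m$-cycles in a Cayley-type blow-up on vertex set $G\times\{1,\ldots,m\}$, invoke a removal lemma to delete $o(N^2)$ edges, and then average over the $N$ edges labelled $[a,i]$ to extract the sets $A_i'$ (the paper uses the threshold $N/m$ exactly as you do). The one point worth comparing is how each argument handles the ``degenerate'' cycles you flag in the last paragraph. The paper works with a \emph{directed} graph, with every arc oriented from level $i$ to level $i{+}1$ (mod $m$); in such a graph any directed cycle of length $m$ is automatically transversal, so the plain directed removal lemma of Alon--Shapira applies with no colouring needed. Your undirected encoding is equally valid but forces you to reach for the coloured/multipartite form of the removal lemma; the paper reserves that tool (its Lemma~\ref{lem-remcol}) for the more general systems in Theorem~\ref{thm-mainsys}. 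Either way the passage to non-abelian $G$ is free, as you note.
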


Rigorously speaking,
Theorem~\ref{thm-green} asserts that for every $\delta>0$ there
exists $\delta'(\delta,m)$ such that, if the equation
$a_1+\ldots+a_m=0$ has less than $\delta N^{m-1}$ solutions with
$a_i\in A_i$, then there are subsets $A'_i\subseteq A_i$,
$|A_i\setminus A'_i|\le\delta' N$ such that the equation
$a_1+\ldots+a_m=0$ has no solution with $a_i\in A'_i$, and the value
of $\delta'$ tends to $0$ as $\delta\to 0$. Let us emphasize that
the value of $\delta'$ does not depend on the order $N$ of the group
$G$ (nor its structure).

The proof of the Regularity Lemma for abelian groups
in~\cite{green05} relies heavily on Fourier analysis techniques, and
thus the result is restricted only to abelian groups. In this paper
we provide a proof of Theorem~\ref{thm-green} building on
combinatorial methods and thus we are able to generalize the result
to arbitrary finite groups. In particular, we prove the following
extension of Theorem~\ref{thm-green}.

\begin{theorem}
\label{thm-main} Let $G$ be a finite group of order $N$. Let
$A_1,\ldots ,A_m$, $m\ge 2$, be sets of elements of $G$ and let $g$
be an arbitrary element of $G$. If the equation $x_1x_2\cdots x_m=g$
has $o(N^{m-1})$ solutions with $x_i\in A_i$, then there are subsets
$A'_i\subseteq A_i$ with $|A_i\setminus A'_i|=o(N)$ such that there
is no solution of the equation $x_1x_2\cdots x_m=g$ with $x_i\in
A'_i$.
\end{theorem}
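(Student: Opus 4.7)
The strategy is to encode the equation $x_1 x_2 \cdots x_m = g$ as a cycle-enumeration problem in a Cayley-type multipartite graph, apply the (purely combinatorial) graph removal lemma, and translate the resulting edge-removal back to an element-removal by a thresholding argument. Crucially, the construction only uses left multiplication and inversion in $G$, which is why the non-abelian case requires no new ideas beyond the abelian one.

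The case $m=2$ is immediate: the solutions correspond to elements of $A_1 \cap g A_2^{-1}$, and deleting this $o(N)$-sized set from $A_1$ suffices. For $m \ge 3$, I would build an $m$-partite graph $H$ on classes $V_1, \ldots, V_m$, each a copy of $G$. For $1 \le i \le m-1$ put an edge between $u \in V_i$ and $v \in V_{i+1}$ whenever $u^{-1} v \in A_i$, and between $V_m$ and $V_1$ put an edge whenever $u^{-1} v \in A_m g^{-1}$. A cycle $v_1, v_2, \ldots, v_m, v_1$ of $H$ with $v_i \in V_i$ determines factors $a_i = v_i^{-1} v_{i+1} \in A_i$ for $i<m$ and $a_m = v_m^{-1} v_1 g \in A_m$ whose product telescopes to $a_1 a_2 \cdots a_m = g$, while each solution $(a_1, \ldots, a_m)$ yields exactly $N$ such cycles, parametrized by the choice of $v_1 \in V_1$. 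Hence the number of copies of $C_m$ in $H$ equals $N$ times the number of solutions, namely $o(N^m)$.

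Next I would invoke the $C_m$-removal lemma, a standard consequence of Szemer\'edi's regularity lemma with a quantitative dependence uniform in $N$ and in the group, to obtain a set $F$ of $o(N^2)$ edges whose removal destroys every copy of $C_m$ in $H$. The edges between $V_i$ and $V_{i+1}$ decompose canonically into $|A_i|$ perfect matchings, one per element of $A_i$, and the edges between $V_m$ and $V_1$ likewise decompose into $|A_m|$ matchings parametrized by $A_m$ via the bijection $a_m \mapsto a_m g^{-1}$. For each $a_i \in A_i$ let $r_i(a_i)$ denote the fraction of its matching lying in $F$; averaging yields $\sum_{a_i \in A_i} r_i(a_i) = o(N)$. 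Fix any $c < 1/m$ and set $A'_i = \{a_i \in A_i : r_i(a_i) < c\}$; then $|A_i \setminus A'_i| \le (1/c) \sum r_i(a_i) = o(N)$, as required.

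To verify no solution remains in $A'_1 \times \cdots \times A'_m$, assume for contradiction that $(a_1, \ldots, a_m)$ with $a_i \in A'_i$ is still a solution. Its $N$ associated cycles all lie in $H$ but have all been destroyed by $F$, so for each $v_1 \in V_1$ at least one of the $m$ edges of the associated cycle belongs to $F$. But the fraction of $v_1$'s for which the edge in the matching of $a_i$ belongs to $F$ is exactly $r_i(a_i) < c$; a union bound over the $m$ matchings shows that at most $\sum_i r_i(a_i) < mc < 1$ fraction of the $v_1$'s are affected, so some cycle survives, a contradiction. The chief obstacle in turning this outline into a rigorous proof is the invocation of the $C_m$-removal lemma for general $m$ with a quantitative bound independent of $N$ and of the group; once that is in hand, the rest of the argument is bookkeeping with matchings and a union bound.
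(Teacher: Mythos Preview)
Your approach is essentially the paper's: build a Cayley-type $m$-partite graph on $G\times\{1,\ldots,m\}$, set up a bijection between transversal $m$-cycles and (solution, base-point) pairs, apply a removal lemma, and threshold the removed edges matching-by-matching. The thresholding/union-bound finish is exactly the paper's pigeonhole, just phrased with fractions $r_i(a_i)$ instead of a cutoff $N/m$.

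There is, however, a genuine gap in the undirected formulation when $m$ is even. Your claim that ``the number of copies of $C_m$ in $H$ equals $N$ times the number of solutions'' is false for even $m\ge 4$: the graph then contains non-transversal $m$-cycles, e.g.\ for $m=4$ any $4$-cycle $v_1v_2v_1'v_2'$ with $v_1,v_1'\in V_1$ and $v_2,v_2'\in V_2$ using only $V_1$--$V_2$ edges. (More generally, a copy of $C_m$ in $H$ projects to a closed walk of length $m$ on the index cycle $\{1,\ldots,m\}$; for odd $m$ such a walk must traverse the cycle, but for even $m$ it need not.) These spurious cycles can be as numerous as $\Theta(N^4)$ even when the equation has no solutions at all, so the hypothesis $o(N^{m-1})$ solutions does not give $o(N^m)$ copies of $C_m$, and the removal lemma does not apply.

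The fix is immediate: orient every edge from $V_i$ to $V_{i+1}$ (indices mod $m$) and invoke the removal lemma for the \emph{directed} cycle $\vec C_m$, which is what the paper does via the Alon--Shapira lemma. In the directed graph every directed $m$-cycle is forced to be transversal, and your count becomes correct. Alternatively, color the edges between $V_i$ and $V_{i+1}$ with color $i$ and use a colored removal lemma for the rainbow $C_m$. Either way the rest of your argument goes through verbatim.
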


We use the multiplicative notation in Theorem~\ref{thm-main} to
emphasize that the group $G$ need not be abelian.

Our technique also allows to extend Theorem~\ref{thm-green} to
equation systems of a certain type. Let $G$ be an abelian group (with
additive notation) and consider an equation system of the following type:
\begin{equation}\label{eq:ab-sys}
\left.\begin{array}{ccccccl}
\epsilon_{11}x_1  & + & \cdots & + & \epsilon_{1m}x_m  & = & 0\\
\vdots &   & &   & \vdots & & \vdots\\
\epsilon_{k1}x_1  & + & \cdots & + & \epsilon_{km}x_m  & = & 0
\end{array}\right\}
\end{equation}
where $\epsilon_{ij}\in \{ -1,0,1\}$, $k\ge 1$ and $m\ge 2$.
The vector $(\epsilon_{i1},\ldots ,\epsilon_{im})$ is referred to as the
{\em characteristic vector} of the $i$--th equation. We say that the
system (\ref{eq:ab-sys}) is  {\em graph representable} by a directed
graph $H$ with $m$ arcs (one for each variable in the system) if the
characteristic vectors of cycles in $H$ are precisely integer linear
combinations of the characteristic vectors of the equations, see
Section~\ref{sec:sys} for details. With this notation, our second
result is the following:

\begin{theorem}
\label{thm-mainsys} Let $G$ be a finite abelian  group of order $N$.
Let $A_1,\ldots,A_m$, $m\ge 2$, be sets of elements of $G$. If the
equation system (\ref{eq:ab-sys})   is graph-representable and has
$o(N^{m-k})$ solutions with $x_i\in A_i$, then there are subsets
$A'_i\subseteq A_i$ with $|A_i\setminus A'_i|=o(N)$ such that there
is no solution of the system (\ref{eq:ab-sys}) with $x_i\in A'_i$.
\end{theorem}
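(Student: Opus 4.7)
The plan is to encode solutions of~(\ref{eq:ab-sys}) as partite copies of the representing digraph $H$ inside a blow-up graph $\Gamma$ over $G$, then apply the coloured/partite graph removal lemma to destroy those copies with a small edge removal, and finally convert that sparse edge removal into a sparse removal of elements from the $A_i$.

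I would first assume $H$ is connected (otherwise each connected component can be treated separately) with vertex set $\{v_1,\ldots,v_h\}$ and arcs $e_1,\ldots,e_m$; graph-representability forces $h=m-k+1$, as the cycle space of a connected digraph has dimension $m-h+1$. Then build the $h$-partite graph $\Gamma$ on $V(H)\times G$ (so $|V(\Gamma)|=hN$) by inserting, for every arc $e_i=(v_j,v_{j'})$, every $a\in A_i$, and every $g\in G$, an edge of ``colour $a$ and type $i$'' between $(v_j,g)$ and $(v_{j'},g+a)$. A partite copy of $H$ in $\Gamma$, meaning a map $\phi\colon V(H)\to V(\Gamma)$ with $\phi(v_j)\in\{v_j\}\times G$ that sends each arc $e_i$ to a type-$i$ edge of some colour $a_i\in A_i$, is then precisely a choice of $\phi(v_1)\in G$ together with a tuple $(a_1,\ldots,a_m)\in A_1\times\cdots\times A_m$ whose signed sum around every cycle of $H$ vanishes. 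By graph-representability this cycle condition is equivalent to solving~(\ref{eq:ab-sys}), so the number of such partite copies equals $N$ times the number of solutions, which is $o(N^h)=o(|V(\Gamma)|^h)$.

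Next I would invoke the partite graph removal lemma applied to the pattern $H$ inside $\Gamma$: for every $\eta>0$ one can destroy all partite copies of $H$ by removing at most $\eta N^2$ edges. Writing $r_{i,a}$ for the number of removed edges of colour $a$ and type $i$, we have $\sum_{i,a}r_{i,a}\le \eta N^2$. Fix $\epsilon\in(0,1/m)$ and set $A_i':=\{a\in A_i:r_{i,a}<\epsilon N\}$; then $\sum_i|A_i\setminus A_i'|\le \eta N/\epsilon$, which is $o(N)$ as $\eta\to 0$. If some solution $(a_1,\ldots,a_m)$ with $a_i\in A_i'$ were to survive, it would support exactly $N$ partite copies of $H$ (indexed by $\phi(v_1)$), and the union bound would leave at least $(1-m\epsilon)N>0$ of them intact, contradicting the removal step.

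The main obstacle is setting up the blow-up so that solutions correspond bijectively to partite copies of $H$ (modulo the free parameter $\phi(v_1)$); this is exactly where graph-representability enters, since it equates the cycle space of $H$ with the span of the equation vectors. Beyond this, handling parallel arcs of $H$ (giving distinct ``types'' between the same pair of parts) and pinning down the precise partite/coloured form of the graph removal lemma are minor technical points that follow from standard reductions.
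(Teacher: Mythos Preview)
Your proposal is correct and follows essentially the same approach as the paper: build the blow-up of $H$ over $G$, establish the bijection between partite/coloured copies of $H$ and solutions of~(\ref{eq:ab-sys}) shifted by an element of $G$ (this is exactly where integral generation of the cycle space is used), apply a coloured removal lemma, and then threshold on the number of removed edges of each label $[a,i]$ to obtain the sets $A_i'$. The only cosmetic differences are that the paper phrases the auxiliary lemma as a coloured directed-graph removal lemma rather than a partite one, and fixes the threshold at $N/m$ rather than a general $\epsilon N$ with $\epsilon<1/m$.
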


Theorem \ref{thm-mainsys} can be also extended to non-abelian groups
at the expense of strengthening the notion of graph
representability. With this strong version which is explained in
Section~\ref{sec:sys} the same technique allows us to prove:

\begin{theorem}
\label{thm-mainsys-nonab} Let $G$ be a finite    group of order $N$
written multiplicatively. Let $A_1,\ldots,A_m$, $m\ge 2$, be sets of
elements of $G$. Consider the equation system
\begin{equation}\label{eq:sysnonab}
\left.\begin{array}{ccccl}
x_{\sigma_1(1)}^{\epsilon_{1\sigma_1(1)}} & \cdots & x_{\sigma_1(m)}^{\epsilon_{1\sigma_1(m)}} & = & 1\\
\vdots & & \vdots & & \vdots\\
x_{\sigma_k(1)}^{\epsilon_{k\sigma_k(1)}} & \cdots &
x_{\sigma_k(m)}^{\epsilon_{k\sigma_k(m)}} & = & 1
\end{array}\right\}
\end{equation}
where $\sigma_1, \ldots ,\sigma_k$ are permutations of $[1,m]$,
$\epsilon_{ij}\in \{ -1,0,1\}$, $k\ge 1$, $m\ge 2$. If the system is
\emph{strongly graph-representable} and has $o(N^{m-k})$ solutions
with $x_i\in A_i$, then there are subsets $A'_i\subseteq A_i$ with
$|A_i\setminus A'_i|=o(N)$ such that there is no solution of the
system (\ref{eq:ab-sys}) with $x_i\in A'_i$.
\end{theorem}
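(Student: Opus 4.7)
The plan is to reduce Theorem~\ref{thm-mainsys-nonab} to the graph removal lemma for a fixed pattern graph, following the template suggested by the proofs of Theorems~\ref{thm-main} and~\ref{thm-mainsys}. Strong graph representability supplies a directed multigraph $H$ whose $m$ arcs are labelled by the variables $x_1,\ldots,x_m$, in such a way that every equation of~(\ref{eq:sysnonab}) is read off from traversing a prescribed closed walk of $H$, with the factors appearing in the order specified by $\sigma_j$ and the exponents given by $\epsilon_{ij}$, and so that these walks account for the full non-commutative cycle structure of $H$. Writing $n=|V(H)|$ and letting $c$ denote its number of connected components, the Euler relation $k=m-n+c$ links the defect $N^{m-k}$ appearing in the hypothesis to $N^{n-c}$.

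First, I would construct an auxiliary $n$-partite graph $\Gamma$ whose parts $W_1,\ldots,W_n$ are disjoint copies of $G$. For every arc of $H$ labelled $x_i$ going from $v_a$ to $v_b$, insert an edge between $g\in W_a$ and $h\in W_b$ whenever $g^{-1}h\in A_i$ (respectively $h^{-1}g\in A_i$ when the corresponding $\epsilon_{ij}$ equals $-1$). A partite-preserving embedding of $H$ into $\Gamma$ then reads off, arc by arc, a tuple $(a_1,\ldots,a_m)\in A_1\times\cdots\times A_m$; strong graph representability is precisely the hypothesis guaranteeing that each of the $k$ non-commutative relations of~(\ref{eq:sysnonab}) follows from the local arc conditions. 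Hence these embeddings are in $N^c$-to-$1$ correspondence with solutions of the system, the factor $N^c$ coming from independent left-translation within each connected component of $H$, and consequently $\Gamma$ contains $o(N^n)=o(|V(\Gamma)|^n)$ copies of $H$.

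Second, I would invoke the coloured graph removal lemma for the fixed pattern $H$: since only $o(|V(\Gamma)|^n)$ copies of $H$ are present, there is a set of $o(|V(\Gamma)|^2)=o(N^2)$ edges whose deletion destroys every copy. Partitioning the deleted edges by arc label, the edges of $\Gamma$ coming from the arc labelled $x_i$ are naturally indexed by pairs $(g,a)\in G\times A_i$, and each individual element $a\in A_i$ accounts for exactly $N$ such edges. Therefore the $o(N^2)$ deletions translate into removing only $o(N)$ elements from each $A_i$, yielding the desired sets $A_i'$ in which no solution of~(\ref{eq:sysnonab}) survives.

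The main obstacle will be verifying the embeddings-to-solutions correspondence in the non-abelian setting: whereas for abelian groups only the cycle space of $H$ needs to realize the system, with a non-commutative $G$ one must also control the order in which the arcs incident to each vertex of $H$ are traversed, so that a closed walk actually multiplies to the identity rather than merely summing to zero. Formalising strong graph representability so that it encodes exactly such a consistent rotation at every vertex (driven by the permutations $\sigma_1,\ldots,\sigma_k$), and checking that it faithfully reproduces every relation of~(\ref{eq:sysnonab}) rather than only a generating set of cycles, is where the delicate part of the argument lies.
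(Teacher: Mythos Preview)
Your outline is essentially the paper's proof: build the blowup $H_0$ on $G\times V(H)$, establish a bijection-up-to-translation between colour-preserving copies of $H$ and solutions, apply the coloured removal lemma, then pigeonhole the deleted arcs back to $o(N)$ elements of each $A_i$.

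The ``obstacle'' you worry about in your last paragraph is precisely what the paper's definition of strong representability is designed to dissolve, and no rotation-system machinery is needed. By definition the $k$ equations are the fundamental cycles of $H$ with respect to a fixed spanning tree $T$, so in particular $H$ is connected ($c=1$, $n=m-k+1$). The copy-to-solution direction is then a straight telescoping product $\prod_j g_{j-1}^{-1}g_j=g_0^{-1}g_m=1$ along each fundamental cycle, since the cyclic order of the arcs in that cycle matches the order $\sigma_i$ in the $i$-th equation. The solution-to-copy direction is obtained by fixing $g_{v_0}$ arbitrarily at a root of $T$ and propagating $g_v=g_{v'}x_i^{\pm 1}$ along the tree; the single non-tree arc closing each fundamental cycle is then forced to be present by the corresponding equation. (Your parenthetical ``respectively $h^{-1}g\in A_i$ when $\epsilon_{ij}=-1$'' is a small confusion: the arc labelled $x_i$ has a fixed orientation in $H$ and the edge rule is simply $g^{-1}h\in A_i$; the sign $\epsilon_{ji}$ only records which way equation $j$ traverses that arc.)

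One point your final paragraph compresses too much: knowing that each $a\in A_i$ labels exactly $N$ arcs is not by itself enough to pass from $o(N^2)$ deleted arcs to $o(N)$ deleted elements. You need that each solution yields $N$ \emph{edge-disjoint} copies of $H$ (different choices of $g_{v_0}$), so that killing all of them forces at least $N$ deletions among arcs carrying only the $m$ labels $[a_1,1],\ldots,[a_m,m]$; hence some label absorbs at least $N/m$ deletions, and one sets $A_i'=A_i\setminus\{a:\text{at least }N/m\text{ arcs labelled }[a,i]\text{ were deleted}\}$.
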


Before proceeding further,
let us present a corollary of Theorem~\ref{thm-mainsys-nonab}
which illustrates possible applications of our results.
Let $G$ be a finite group and $A, B\subseteq G$. The
representation function $r_{A,B}:G\rightarrow \N$, defined as
$r_{A,B}(g)=\left|\{ (a,b)\in A\times B: ab=g\}\right|$, counts the
number of representations of an element $g\in G$ as  a product of an
element in $A$ and one in $B$. We write $r_A$ for $r_{A,A}$.

\begin{corollary}
\label{cor:appl} Let $G$ be a finite group of order $N$ and let $A,
B, C, D, E\subseteq G$. If
$$\frac{1}{N}\sum_{g\in E} r_{A,B}(g)r_{C,D}(g)=o(N^2)\;\mbox{,}$$
then it is possible to eliminate $o(N)$ elements in each of the sets to obtain
sets $A', B', C', D', E'$ such that
$$\sum_{g\in E'} r_{A',B'}(g)r_{C',D'}(g)=0\;\mbox{.}$$
In particular,
\begin{enumerate}
\item If $\frac{1}{N}\sum_{g\in E} r_A^2(g)=o(N^2)$, then $(A')^2\cap E'=\emptyset$ ($A'$ is $E'$--product--free).
\item If $\frac{1}{N}\sum_{g\in G\setminus A} r_A^2(g)=o(N^2)$, then $|(A')^2|=|A'|+o(N)$ ($A'$ has small doubling).
\item If $\frac{1}{N}\sum_{g\in G} r_{A,B}(g)r_{B,A}(g)=o(N^2)$, then $|A'B'\cap B'A'|=o(N)$ (almost all pairs do not commute).
\end{enumerate}
\end{corollary}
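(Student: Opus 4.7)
The plan is to realize the hypothesis as a counting statement for solutions of a system of two word-equations in five variables, verify that this system is strongly graph-representable in the sense of Section~\ref{sec:sys}, apply Theorem~\ref{thm-mainsys-nonab}, and then derive the three itemized consequences by specialising and intersecting the output sets. The only step that is not essentially routine bookkeeping will be the verification that the relevant graph representation is \emph{strong}; everything else rests on the fact that a finite intersection of $o(N)$-approximations of a set is itself an $o(N)$-approximation.

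First I would observe that
$$\sum_{g\in E}r_{A,B}(g)\,r_{C,D}(g)=\bigl|\{(a,b,c,d,e)\in A\times B\times C\times D\times E:\ ab=e,\ cd=e\}\bigr|,$$
so the hypothesis says exactly that the system $x_1x_2x_5^{-1}=1$, $x_3x_4x_5^{-1}=1$ admits $o(N^3)$ solutions in $A\times B\times C\times D\times E$, which is the hypothesis of Theorem~\ref{thm-mainsys-nonab} with $m=5$ and $k=2$. The natural graph for this system is the directed multigraph on vertices $\{v_0,v_1,v_2,v_3\}$ with arcs $a\colon v_0\to v_1$, $b\colon v_1\to v_2$, $e\colon v_0\to v_2$, $c\colon v_0\to v_3$ and $d\colon v_3\to v_2$: two triangles glued along the arc $e$. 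Reading the triangles $v_0v_1v_2v_0$ and $v_0v_3v_2v_0$ recovers exactly the two equations, including the ordering of the variables, and every other closed walk is an integer linear combination of these two. Once the strong graph representability is certified using Section~\ref{sec:sys}, Theorem~\ref{thm-mainsys-nonab} yields sets $A',B',C',D',E'$, each $o(N)$-close to the originals, in which the system has no solution; this is exactly $\sum_{g\in E'}r_{A',B'}(g)\,r_{C',D'}(g)=0$.

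The three particular statements then follow by specialising the input. For (1), I would set $B=C=D=A$ and replace the four $o(N)$-approximations of $A$ returned by the theorem by their common intersection $A'$; a pair $a,b\in A'$ with $ab\in E'$ would produce the forbidden tuple $(a,b,a,b,ab)$, so $(A')^2\cap E'=\emptyset$. For (2), I would take $E=G\setminus A$, so that $(A')^2\subseteq G\setminus E'=A\cup\bigl((G\setminus A)\setminus E'\bigr)$, a set of size $|A'|+o(N)$; combined with the elementary bound $|(A')^2|\ge|A'|$, valid in any finite group since the right translate $A'a\subseteq(A')^2$ has cardinality $|A'|$ for any $a\in A'$, this yields $|(A')^2|=|A'|+o(N)$. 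For (3), I would take $C=B$, $D=A$ and $E=G$, and then merge the two $o(N)$-approximations of $A$ (resp.~of $B$) by intersection; any $g\in A'B'\cap B'A'$ furnishes $a,a'\in A'$ and $b,b'\in B'$ with $ab=b'a'=g$, hence a forbidden tuple $(a,b,b',a',g)$ unless $g\notin E'$, and so $|A'B'\cap B'A'|\le|G\setminus E'|=o(N)$.
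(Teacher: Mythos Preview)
Your proof is correct and follows essentially the same approach as the paper: encode the count as solutions to a two-equation system in five variables, check strong graph-representability, and invoke Theorem~\ref{thm-mainsys-nonab}. Your system $x_1x_2x_5^{-1}=1$, $x_3x_4x_5^{-1}=1$ differs from the paper's $x_1x_2x_4^{-1}x_3^{-1}=1$, $x_1x_2x_5^{-1}=1$ only by replacing the first equation with the quotient of the two, and both are strongly represented by the \emph{same} four-vertex, five-arc graph (two triangles sharing the arc $x_5$), merely with different spanning trees ($\{x_1,x_3,x_5\}$ for yours versus $\{x_1,x_2,x_3\}$ for the paper's). Your handling of the three specialisations is in fact tidier than the paper's: you make explicit the intersection of the several $o(N)$-approximations of a repeated set, and for item~3 you correctly take $C=B$, $D=A$ so that the count becomes $\sum_g r_{A,B}(g)r_{B,A}(g)$, whereas the paper's stated choice ``$A=C$ and $B=D$'' appears to be a slip.
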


\begin{proof}
Consider the following equation system:
\begin{equation}\label{eq:sys2}
\left.\begin{array}{lcl}
x_1x_2x_4^{-1}x_3^{-1} & = & 1\\
x_1x_2x_5^{-1} & = & 1
\end{array}\right\}
\end{equation}
The system (\ref{eq:sys2}) is strongly representable by the graph $H$
depicted in Figure~\ref{figappl}.

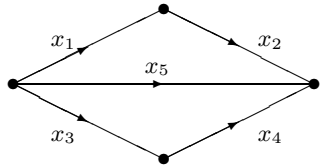
\begin{figure}[ht] \hspace{2cm}
 \setlength{\unitlength}{5mm}
 \begin{center}
 \begin{picture}(16,5){\small
\put(4,3){\vector(2,-1){2}}
 \put(8,1){\vector(2,1){2}}
\put(8,5){\vector(2,-1){2}}
 \put(4,3){\vector(2,1){2}}
\put(4,3){\vector(1,0){4}}
\put(4,3){\line(2,-1){4}}%
\put(5,1.5){{\footnotesize $x_3$}}%
 \put(8,1){\line(2,1){4}}%
 \put(10.5,1.5){{\footnotesize $x_4$}}%
\put(12,3){\line(-2,1){4}}%
\put(10.5,4){{\footnotesize $x_2$}}%
 \put(4,3){\line(2,1){4}}%
\put(5,4){{\footnotesize $x_1$}}
  \put(4,3){\line(1,0){8}}%
\put(7.5,3.3){{\footnotesize $x_5$}} \put( 4,3){\circle*{0.3}}
 \put(8,1){\circle*{0.3}}
\put(8,5){\circle*{0.3}}
 \put(12, 3){\circle*{0.3}}
}
\end{picture}
\end{center}
\caption{A directed graph representing the equation system
(\ref{eq:sys2}).} \label{figappl}
\end{figure}

The number of solutions of (\ref{eq:sys2}) with $x_1\in A$, $x_2\in
B$, $x_3\in C$, $x_4\in D$ and $x_5\in E$ is $\sum_{g\in E}
r_{A,B}(g)r_{C,D}(g)$. Hence, if it holds that
$$\frac{1}{N}\sum_{g\in E} r_{A,B}(g)r_{C,D}(g)=o(N^2)\;\mbox{,}$$
then there are $o(N^3)$ solutions of the system (\ref{eq:sys2}). By
Theorem~\ref{thm-mainsys-nonab} applied with $m=5$, $k=2$, $A_1=A,
A_2=B, A_3=C, A_4=D$ and $A_5=E$, it is possible to remove $o(N)$
elements from each of the sets $A,\ldots,E$ obtaining sets
$A',\ldots,E'$ such that the system (\ref{eq:sys2}) has no solution
with $x_1\in A',\ldots,x_5\in E'$.

Applying the above argument with $A=B=C=D$, we obtain that
$\sum_{g\in E'} r_{A'}^2(g)=0$, which is equivalent to $(A')^2\cap
E'=\emptyset$. This proves 1. Setting $E=G\setminus A$, we get
$\sum_{g\in E'} r_{A'}^2(g)=0$. Since $(A')^2\subseteq A\cup
(E\setminus E')$, $|A\setminus A'|=o(N)$, $|E\setminus E'|=o(N)$, we
obtain 2. Similarly, 3 is derived by applying the Corollary for
$A=C$ and $B=D$.
\end{proof}

\section{Removal Lemma for groups} \label{sec:main}

In our arguments, the following consequence of a variant of Szemer\'edi Regu\-larity Lemma
for directed graphs becomes useful:

\begin{lemma}[Alon and Shapira~\cite{alon04}, Lemma 4.1]
\label{lm-alon} Let $H$ be a fixed directed graph of order $h$. If
$G$ contains less than $o(n^h)$ copies of $H$, there exists a set
$E$ of at most $o(n^2)$ arcs of $G$ such that the graph obtained
from $G$ by removing the arcs of $E$ is $H$-free.
\end{lemma}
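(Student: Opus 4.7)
The plan is to derive Lemma~\ref{lm-alon} from a directed version of Szemer\'edi's Regularity Lemma by the standard three-step scheme: partition, clean, embed. Fix a directed graph $H$ of order $h$ and let $\varepsilon > 0$ be the target arc-removal rate; I will produce $\delta = \delta(\varepsilon, h) > 0$ such that every directed graph $G$ on $n$ vertices with at most $\delta n^h$ copies of $H$ can be made $H$-free by deleting at most $\varepsilon n^2$ arcs.

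First, I would apply a directed Szemer\'edi Regularity Lemma to $G$ with a small regularity parameter $\eta = \eta(\varepsilon, h)$ to be fixed below, obtaining an equitable vertex partition $V(G) = V_1 \cup \cdots \cup V_t$ with $t$ bounded by a function of $\eta$ alone, such that for all but an $\eta$-fraction of the ordered pairs $(V_i, V_j)$ the bipartite arc set from $V_i$ to $V_j$ is $\eta$-regular; both directions are treated separately. Next, I would perform a cleaning step: form $G'$ from $G$ by removing every arc that lies inside some $V_i$, lies in an irregular ordered pair, or lies in a regular ordered pair whose density is less than a threshold $d = d(\varepsilon, h)$. The number of removed arcs is at most $n^2/t + \eta n^2 + d n^2$, which falls below $\varepsilon n^2$ once $\eta$ and $d$ are small compared with $\varepsilon$ and $t \ge 3/\varepsilon$.

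Second, I would establish a counting lemma: if $G'$ still contains a copy of $H$, then $G$ contains at least $c \cdot n^h$ copies of $H$, where $c = c(d, \eta, h) > 0$. The argument is the iterated embedding for regular pairs, adapted to directed graphs. A copy of $H$ in $G'$ assigns each vertex $v$ of $H$ to some part $V_{i_v}$ with adjacent vertices in distinct parts (since intra-part arcs were removed); one then embeds the vertices of $H$ into $G$ one at a time, always picking the image of the next vertex from the intersection of the relevant forward- and backward-neighborhoods inside the prescribed $V_{i_v}$. Regularity of each ordered pair used guarantees that this intersection has size at least $(d - h\eta)^{\deg_H(v)}|V_{i_v}|$, yielding the lower bound $c \cdot n^h$ on the number of embeddings. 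Taking $\delta := c/2$ then forces $G'$ to be $H$-free whenever $G$ has fewer than $\delta n^h$ copies of $H$, which completes the proof.

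The main obstacle is the embedding step in the directed setting. Unlike for undirected graphs, one has to control simultaneously the out-neighborhoods and in-neighborhoods of already-embedded vertices, which requires regularity of both directions of every relevant ordered pair and a careful choice of the order in which the vertices of $H$ are embedded so that each new vertex has bounded degree toward the already-placed ones. A secondary issue is that distinct non-adjacent vertices of $H$ may be assigned to the same part $V_i$; the slicing argument must avoid reusing previously chosen vertices, which is handled by insisting $\eta \ll d/h$ so that the surviving neighborhood in each step still has density bounded away from $0$.
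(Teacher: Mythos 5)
The paper itself offers no proof of this lemma---it is imported verbatim from Alon and Shapira~\cite{alon04}---and their proof is precisely the regularity-based partition/clean/count scheme you outline, so your proposal follows essentially the same route as the source the paper relies on. Your sketch is correct in outline; the two points that need care (both of which you already flag) are that the directed regularity lemma must control the bipartite digraphs in \emph{both} directions between each pair of parts, and that the greedy embedding must avoid reusing previously chosen vertices when non-adjacent vertices of $H$ are assigned to the same part.
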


The proofs of Theorems~\ref{thm-main}, \ref{thm-mainsys} and
\ref{thm-mainsys-nonab} consist in constructing a blow-up graph of
a small graph
$H$ (which is a cycle in the case of Theorem \ref{thm-main}) such that any
solution of the equations gives rise to $N$ copies of $H$ and every
copy of $H$ comes in fact from a solution of the equations. We then
apply the removal lemma for graphs and, by a pigeonhole principle,
reduce the $o(N^2)$ arcs from Lemma \ref{lm-alon} to the $o(N)$
elements stated in Theorem \ref{thm-main}.

\begin{proof}[Proof of Theorem~\ref{thm-main}]
Fix $\delta_0>0$ and $m\ge 2$. Let $G$ be a finite group of order
$N$, let $g$ be an element of $G$ and let $A_1,\ldots,A_m$ be sets
of elements of $G$.

We define an auxiliary directed graph $H_0$ whose vertex set is the
set $G\times \{ 1,\ldots ,m\}$, i.e., they are pairs formed by an
element of the group $G$ and an integer between $1$ and $m$. There
is an arc in $H_0$  from a vertex $(x,i)$, $1\le i\le m-1$,  to a
vertex $(y,i+1)$ if there exists an element $a_i\in A_i$ such that
$xa_i=y$. This arc is labeled by the pair $[a_i,i]$. $H_0$ also
contains an arc from a vertex $(x,m)$ to a vertex $(y,1)$ if there
exists an element $a_m\in A_m$ such that $xa_mg^{-1}=y$. This arc is
labeled by the pair $[a_m,m]$. Let $N_0=mN$ denote the order of
$H_0$. Note that, for each element $a_i\in A_i$, $H_0$ contains
exactly $N$ arcs labelled with $[a_i,i]$.

Observe that any directed cycle of $H_0$ with length $m$  gives a
solution of the equation: if $[a_1,1], [a_2,2],\ldots ,[a_m,m]$ are
the labels of the arcs in the cycle and it contains the vertex
$(z,1)$, then $za_1a_2\cdots a_mg^{-1}=z$ by the definition of
$H_0$. In the opposite way, each solution $a_1,\ldots,a_m$ of
(\ref{eq-1}) corresponds to $N$ edge disjoint directed cycles of
length $m$ in $H_0$:
\begin{equation}
(z,1),(za_1,2),(za_1a_2,3),\ldots,(za_1\ldots a_{m-1},m),(za_1\ldots
a_mg^{-1},1)=(z,1)\label{eq-2}
\end{equation}
one for each of the $N$ distinct possible choices of $z\in G$.

Suppose, using the hypothesis, that there are less than $\delta_0
N^{m-1}$ solutions of the equation
\begin{equation}
x_1x_2\cdots x_m=g \mbox{ with $x_i\in A_i$.}\label{eq-1}
\end{equation}
By the correspondence of the cycles of $H_0$ and the solutions of
(\ref{eq-1}), the directed graph $H_0$ contains no more than
$\delta_0 N^m$ distinct directed cycles of length $m$.

Apply Lemma~\ref{lm-alon} to $H_0$ and the directed cycle of length
$m$ with $\delta=\delta_0/m^m$: since $H_0$ has less than
$\delta_0N^m=\delta N_0^m$ copies of the directed cycle of length
$m$, there is a set $E$ of at most $\delta'N_0^2$ arcs such that
$H_0-E$ contains no directed cycle of length $m$ with some $\delta'$
depending only on $\delta$ and $m$.

Let $B_i$ be the set of those elements $a\in A_i$ such that $E$
contains at least $N/m$ arcs labeled with $[a,i]$. Since
$|E|\le\delta' N_0^2$, the size of each $B_i$ is at most
$m|E|/N\le\delta' m^3 N$. Set $A'_i=A_i\setminus B_i$. Since the
size of $B_i$ is bounded by $\delta' m^3 N$, $\delta'$ depends on
$\delta$ and $m$ only, and $\delta'\to 0$ as $\delta_0\to 0$, the
theorem will be proven after we show that there is no solution of
the equation (\ref{eq-1}) with $a_i\in A'_i$.

Assume that there is a solution with $a_i\in A'_i$ of the equation
(\ref{eq-1}). Consider the $N$ edge disjoint directed cycles of
length $m$ corresponding to $a_1,\ldots,a_m$ which are given by
(\ref{eq-2}). Each of these $N$ cycles contains at least one of the
arcs of $E$ and the arcs of these $N$ edge disjoint cycles are
labelled only with the pairs $[a_1,1]$, $[a_2,2]$, \dots, $[a_m,m]$.
Since these directed cycles are disjoint, the set $E$ contains at
least $N/m$ arcs labelled $[a_i,i]$ for some $1\le i\le m$.
Consequently, $a_i\in B_i$ and thus $a_i\not\in A'_i$. We conclude
that there is no solution of (\ref{eq-1}) with $a_i\in A'_i$.
\end{proof}

\section{Extensions to systems of equations}\label{sec:sys}

Let us now recall the notion of cycle spaces of directed graphs. If
$H$ is a directed graph with $m$ arcs, then the {\em cycle space} of
$H$ is the vector space over $\Q$ spanned by the characteristic
vectors of cycles of $H$ where the {\em characteristic vector} of
a cycle $C$ of $H$ is the $m$-dimensional vector $v$ with each coordinate
associated with one of the arcs such that the $i$-th coordinate of
$v$ is $+1$ if the $i$-th arc is traversed by $C$ in its direction,
it is $-1$ if it is traversed by $C$ in the reverse direction, and
it is $0$ if the arc is not traversed by $C$.

A set of integer vectors contained in the cycle space
is said to {\em integrally generate} the cycle space of $H$
if they are independent and every vector of the cycle space can be
expressed as a linear combination of these vectors with integer
coefficients. It is known~\cite{liebchen} that the vectors integrally
generate the cycle space if and only if every maximum
square submatrix of the matrix formed by these vectors
has determinant $0$, $+1$ or
$-1$. This turns out to be equivalent to the fact that a determinant
of one such non-singular submatrix is $+1$ or $-1$. Let us now give some
examples. If $T$ is a spanning tree of $H$, then
the characteristic vectors of the fundamental cycles
with respect to $T$ always integrally generate the cycle space
of $H$~\cite{liebchen}. On the other hand,
an example of
a set of characteristic vectors that generate but not integrally generate
the cycle space of a graph is given in Figure~\ref{fig-ex}.

\begin{figure}[h,t]
\setlength{\unitlength}{5mm}
 \begin{center}
 \begin{picture}(16,5)
    \put(8,3){\vector(-2,1){2}}
    \put(8,3){\vector(2,1){2}}
    \put(4,1){\vector(2,1){2}}
    \put(12,1){\vector(-2,1){2}}
    \put(4,5){\vector(0,-1){2}}
     \put(12,5){\vector(0,-1){2}}
    \put(8,3){\line(-2,1){4}}
    \put(8,3){\line(2,1){4}}
    \put(4,1){\line(2,1){4}}
    \put(12,1){\line(-2,1){4}}
    \put(4,5){\line(0,-1){4}}
     \put(12,5){\line(0,-1){4}}
    \put(5.8,4.2){{\footnotesize $e_0$}}%
    \put(3,2.8){{\footnotesize $e_1$}}
    \put(5.8,1.5){{\footnotesize $e_2$}}
    \put(9.5,4.2){{\footnotesize $e_3$}}
    \put(12.5,2.8){{\footnotesize $e_4$}}
    \put(9.5,1.5){{\footnotesize $e_5$}}
    \put(4,1){\circle*{0.3}}
    \put(4,5){\circle*{0.3}}
    \put( 12,1){\circle*{0.3}}
    \put(12,5){\circle*{0.3}}
    \put(8,3){\circle*{0.3}}
\end{picture}
\end{center}
\caption{An example of a set of cycles generating but not integrally
         generating the cycle space of a directed graph:
         the cycles $e_0e_1e_2e_3e_4e_5$ and $e_0e_1e_2e_5^{-1}e_4^{-1}e_3^{-1}$
     generate the cycle space of the depicted directed graph
     but they do not integrally generate it: the cycle $e_0e_1e_2$ can only be written
     as a rational (not integral) linear combination of the two cycles in the generating set.} \label{fig-ex}
\end{figure}
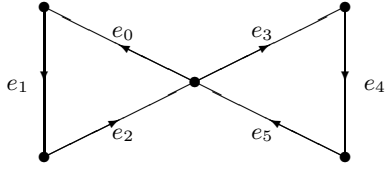

Consider now the   equation system (\ref{eq:ab-sys}). The vector
$(\epsilon_{i1},\ldots ,\epsilon_{im})$ is referred to as the {\em
characteristic vector} of the $i$--th equation. The system is said
to be {\em graph-representable} if there exists a directed graph $H$
with $m$ arcs, each associated with one of the variables
$x_1,\ldots,x_m$, such that the characteristic vectors of the
equations integrally generate the cycle space of $H$. Such a
directed graph $H$ is called a {\em graph representation} of the
equation system (\ref{eq:ab-sys}). Note that the condition that
the characteristic vectors of the equations integrally generate
the cycle space can be efficiently tested since it is equivalent
to computing the value of the determinant of a matrix as explained
in the previous paragraph.

The proof of Theorem~\ref{thm-mainsys} follows the lines of the one
for Theorem \ref{thm-main}. In this case we use the following
colored version of Lemma~\ref{lm-alon}.

\begin{lemma}[Removal Lemma for arc-colored directed graphs] \label{lem-remcol}
Let $m$ be a fixed integer and $H$ a directed graph with its arcs
colored with $m$ colors. If a directed graph $G$ with edges colored
with $m$ colors contains less than $o(n^h)$ copies of $H$ (the
colors of edges in the copy and $H$ must be the same), there exists
a set $E$ of at most $o(n^2)$ arcs such that the graph obtained from
$G$ by removing the arcs contained in $E$ is $H$-free.
\end{lemma}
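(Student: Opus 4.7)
The plan is to derive this colored variant from the uncolored Lemma~\ref{lm-alon} by an encoding that turns an $m$-arc-colored directed graph into an uncolored directed graph on $O(n)$ vertices. The construction I have in mind is a layered blow-up: set $V(\widehat{G})=V(G)\times[m]$, so that $|V(\widehat{G})|=mn=\Theta(n)$, and for each arc $(u,v)$ of color $c$ in $G$ include the single arc $(u,c)\to(v,c)$ in $\widehat{G}$, placing each color-$c$ arc inside the $c$-th layer. To force an embedding to respect layers, attach to every cluster $\{(v,1),\ldots,(v,m)\}$ a rigid marker gadget, for instance the transitive-tournament arcs $(v,i)\to(v,j)$ for $i<j$; then build $\widehat{H}$ from $H$ under the exactly analogous rule.

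The central claim to verify is a bijection, up to a bounded multiplicative factor, between color-preserving copies of $H$ in $G$ and copies of $\widehat{H}$ in $\widehat{G}$. The forward direction is immediate: a color-preserving $\phi\colon V(H)\to V(G)$ lifts to $(x,i)\mapsto(\phi(x),i)$. For the converse, rigidity of the cluster marker forces any embedding of $\widehat{H}$ into $\widehat{G}$ to map each $V(H)$-cluster into a single $V(G)$-cluster and to preserve the layer index, so the embedding originates from a color-preserving $H$-copy in $G$.

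Hence the hypothesis that $G$ has $o(n^h)$ color-preserving copies of $H$ translates into $\widehat{G}$ having $o((mn)^{m h})$ copies of $\widehat{H}$, and Lemma~\ref{lm-alon} applied to $(\widehat{G},\widehat{H})$ outputs a set $\widehat{E}$ of at most $o((mn)^2)=o(n^2)$ arcs whose removal leaves $\widehat{G}$ free of $\widehat{H}$. To finish, translate $\widehat{E}$ back to $G$: each layer arc in $\widehat{E}$ is precisely one color-$c$ arc of $G$, which we place in $E$; any color-preserving $H$-copy in $G\setminus E$ would lift to a $\widehat{H}$-copy surviving in $\widehat{G}\setminus\widehat{E}$, a contradiction.

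The main obstacle lies exactly in those arcs of $\widehat{E}$ that belong to the cluster markers rather than to color layers, since such arcs have no direct counterpart in $G$. I expect to deal with this by choosing the marker redundantly enough (for example, by replicating its arcs so that every $\widehat{H}$-copy meets many layer arcs, or by arguing that each marker arc of $\widehat{E}$ can be swapped for a bounded number of layer arcs while preserving $\widehat{H}$-freeness). In either case the final set $E\subseteq E(G)$ has size $o(n^2)$, as required.
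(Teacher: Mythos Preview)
The paper does not reduce to the uncolored lemma at all; it simply remarks that the proof of Lemma~\ref{lm-alon} goes through unchanged once one replaces the regularity lemma by its edge-colored version \cite[Lemma 1.18]{simonovits}, regularising all $m$ color classes simultaneously with a single vertex partition and then counting and cleaning copies of $H$ color by color. Your black-box reduction would be more elegant if it worked, but the plan has a genuine gap beyond the one you already flag.

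The rigidity claim is false. Take $m=2$ and let $H$ be a single arc $a\to b$ of color~$1$; then $\widehat H$ is the four-vertex acyclic digraph with arcs $(a,1)\to(a,2)$, $(b,1)\to(b,2)$ and $(a,1)\to(b,1)$. Let $G$ carry no color-$1$ arcs at all, while its color-$2$ arcs form a tournament on $n$ vertices. Then layer~$2$ of $\widehat G$ is that tournament, and any tournament on $n$ vertices contains $\Theta(n^4)$ (not necessarily induced) copies of the pattern $\widehat H$. Thus $G$ has zero color-preserving copies of $H$ while $\widehat G$ has $\Theta(n^4)=\Theta\bigl((mn)^{mh}\bigr)$ copies of $\widehat H$, so the hypothesis of Lemma~\ref{lm-alon} is not met and the reduction collapses. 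The difficulty is intrinsic: since each color class of $G$ is an arbitrary digraph, any fixed marker gadget you attach to clusters will also occur inside a single layer whenever that color class happens to contain it, so no constant-size marker can force clusters to map to clusters.

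The obstacle you do flag---marker arcs landing in $\widehat E$---is also not repaired by the fixes you sketch. There are only $O_m(n)$ marker arcs in total, so the uncolored removal lemma may legitimately delete \emph{all} of them and still stay within its $o(n^2)$ budget; once they are gone, every lift of a colored $H$-copy is destroyed in $\widehat G$ regardless of which layer arcs survive, and you can no longer read off any removal set in $G$. ``Swapping'' a deleted marker arc at $v$ for layer arcs would require removing all $\Theta(n)$ layer arcs incident to that cluster, which over $\Theta(n)$ deleted markers gives $\Theta(n^2)$ rather than $o(n^2)$.
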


Lemma \ref{lem-remcol} can be proved by combining the proof of Lemma
\ref{lm-alon} with the edge-colored version of the Regularity Lemma
stated for instance in \cite[Lemma 1.18]{simonovits}.

\begin{proof}[Proof of Theorem~\ref{thm-mainsys}]
Let $H$ be a graph representation of the equation system
(\ref{eq:ab-sys}). We can assume without loss of generality that $H$
is connected. We view the arc corresponding to the variable $x_i$ as
colored with the color $i$. In this way, the arcs of $H$ are colored
with numbers from $1$ to $m$. Since the dimension of the cycle space
of $H$ is $k$ (as the characteristic  vectors of the equations from
(\ref{eq:ab-sys}) are assumed to be independent) and $H$ is
comprised of $m$ arcs, the number of the vertices of $H$ is
$h=m-k+1$.

Next, we construct an auxiliary directed graph $H_0$. The vertex set
of $H_0$ is $G\times V(H)$. For every arc $(u,v)$ of $H$ associated
with $x_i$, the directed graph $H_0$ contains $N|A_i|$ arcs from
$(g,u)$ to $(ga,v)$, one for each $g\in G$ and each $a\in A_i$. The
arc from $(g,u)$ to $(ga,v)$ is colored $i$ and labeled by the pair
$[a,i]$. The order of $H$ is $N_0=hN$, its size is
$N(|A_1|+\cdots+|A_m|)$ and its arcs are colored with numbers
$1,\ldots,m$. We call $H_0$ the \emph{blowup graph} of $H$ by
$A_1,\ldots,A_m$.

Let $H'$ be a subgraph of $H_0$ isomorphic to $H$ (preserving the
colors). The arc of $H'$ colored with $i$ is an arc from a vertex
$(g,u)$ to a vertex $(ga_i,v)$ for some $a_i\in A_i$. Setting
$x_i=a_i$ yields a solution of the system (\ref{eq:ab-sys}): indeed,
if $C$ is a cycle corresponding to the $j$-th equation, then the
cycle $C$ is also present in $H'$ as a cycle $(g_1,u_1)(g_2,u_2)
\ldots(g_l,v_l)$. If $\gamma_t$ is the color of the arc
$\left(\left(g_t,u_t\right), \left(g_{t+1},u_{t+1}\right)\right)$
(indices taken modulo $l$), then $a_{\gamma_t}=g_{t+1}-g_t$, if the
arc is traversed in its direction, and $a_{\gamma_t}=g_{t}-g_{t+1}$
otherwise, and thus
$$0=\sum_{i=1}^l \left(g_{i+1}-g_i\right)=\sum_{i=1}^l \epsilon_{j\gamma_i}a_{\gamma_i}=
     \epsilon_{j1}a_1+\ldots+\epsilon_{jm}a_m\;\mbox{.}$$
Note that we can freely rearrange the summands in the above equation
as the group $G$ is abelian.

We have seen that every subgraph of $H_0$ isomorphic to $H$
corresponds to a solution of the system (\ref{eq:ab-sys}). Let us
now show that every solution of (\ref{eq:ab-sys}) corresponds to $N$
edge disjoint copies of $H$. Fix a vertex $u_0$ of $H$, an element
$z$ of $G$ and a solution of the system $a_1\in A_1,\ldots,a_m\in
A_m$. Define $\varphi: V(H)\to G$ such that $\varphi(u_0)=z$ and
$\varphi(u')-\varphi(u)=a_i$ for an arc $(u,u')$ of $H$
corresponding to the variable $x_i$. By the graph representability
of the system, the function $\varphi$ is well defined:  if there are
two paths from $u_0$ to a vertex $u$ in $H$ they close a cycle $C$
which can be expressed as an integral linear combination of the
cycles in the system. Since the $a_i$'s form a solution of the
system, the sum of the labels on the edges along each of the cycles
arising from the system is zero, and therefore this is also the case
for $C$. Since $H$ is connected, the set of vertices $\{(u,\varphi
(u)), u\in V(H)\}$ induce a copy of $H$ in $H_0$. Since there are
$N$ choices for $z$, and two different choices yield edge--disjoint
copies of $H$, every solution of the system with $a_i\in A_i$ gives
rise to $N$ edge--disjoint copies of $H$.

The proof now proceeds as in Theorem \ref{thm-main} except that
instead of a cycle of length $m$ we aim to consider copies of the
graph $H$. Fix $\delta_0>0$ and apply Lemma~\ref{lem-remcol} for
$\delta=\delta_0/h^h$ which yields $\delta'>0$. If there are less
than $\delta_0 N^{m-k}=\delta_0 N^{h-1}$ solutions of the system
(\ref{eq:ab-sys}), the directed graph $H_0$ contains at most
$\delta_0 N^h=\delta N_0^h$ distinct copies of $H$. By the choice of
$\delta$, there is a set $E$ of at most $\delta' N_0^2$ arcs such
that $H_0\setminus E$ has no copy of $H$.

Let $B_i$ be the set of those elements $a\in A_i$ such that $E$
contains at least $N/m$ arcs $\left((g,u),(ga,v)\right)$ colored
with $i$. Since $|E|\le\delta' N_0^2$, the size of each $B_i$ is at
most $m|E|/N\le\delta' mN_0^2/N=\delta' mh^2 N\le\delta' m^3 N$. Set
$A'_i=A_i\setminus B_i$. Since the size of $B_i$ is bounded by
$\delta' m^3 N$, and $\delta'\to 0$ as $\delta_0\to 0$, the theorem
will be proven after we show that there is no solution of the system
(\ref{eq:ab-sys}) with $a_i'\in A'_i$.

Assume that there is a solution $a_1',\ldots,a_m'$ of the equation
system (\ref{eq:ab-sys}) such that $a_i'\in A'_i$ and consider the
$N$ disjoint copies of $H$ corresponding to this solution. For every
$i$, the $N$ copies of $H$ contain together $N$ arcs colored with
$i$ that are of the form $\left((g,u),(ga'_i,v)\right)$. Hence,
there exists an $i_0$ such that $E$ contains at least $N/m$ arcs
that are colored with $i_0$ and are of the form
$\left((g,u),(ga'_i,v)\right)$. Consequently, $a_i'\in B_i$ and thus
$a_i'\not\in A'_i$ which violates the choice of the solution.
\end{proof}

We have already mentioned that, if the characteristic vectors of the
equations from a system corresponding to fundamental cycles of a
graph $H$ with respect to one of its spanning trees,
then the equation system is graph-representable. If there exists
a representation of this special type,
then we say the system is strongly graph representable.
More precisely, the system (\ref{eq:sysnonab}) is {\it strongly
graph representable} if there is a directed graph $H$ with $m$ arcs
colored by $1,\ldots ,m$ and a spanning tree $T$ of $H$ such that
the fundamental cycles of $H$ with respect to $T$ are cycles
$C_i$ defined as follows: $C_i$ is the cycle traversing the arcs of $H$
in the order $e_{i1}\ldots e_{im}$ where some of $e_{ij}$ are ``empty'',
i.e., they do not define an arc of $H$. If $\epsilon_{i\sigma_i(j)}=+1$,
then $e_{ij}$ is the arc colored with $\sigma_i(j)$ traversed in its direction,
if $\epsilon_{i\sigma_i(j)}=-1$, then $e_{ij}$
is the arc colored with $\sigma_i(j)$
traversed in the opposite direction, and if $\epsilon_{i\sigma_i(j)}=0$,
then $e_{ij}$ is empty.
Note that the condition on the equation system being strongly representable
implies that every equation contains a variable that is not in any of
the other equations.
An example can be found in Figure~\ref{figappl} where the graph strongly
represents the equation system arising from Corollary~\ref{cor:appl}.

This stronger condition suffices to extend
Theorem~\ref{thm-mainsys} to the non-abelian case.

\begin{proof}[Proof of Theorem~\ref{thm-mainsys-nonab}]
The proof is
analogous to the one of Theorem \ref{thm-mainsys}. Let $H$ be a
a strong representation of the system. In particular, $H$ has $m$ edges and
$h=m-k+1$ vertices. Let $H_0$ be the graph with vertex set $G\times V(H)$
that contains
with an arc $((g,u),(ga,v))$ for each arc $(u,v)$ in $H$ that has color $i$ and
each $a\in A_i$.  Such an arc has also color $i$ in $H_0$.

Let $H'$ be a subgraph of $H_0$ that is isomorphic to $H$ (preserving
the colors of the edges). If $((g,u),(g',v))$ is an arc of $H'$
colored with $i$, set $x_i=g^{-1}g'$. Observe that $x_i\in A_i$.
We claim that $a_i$ is a solution of the equation system. Consider
the $i$-th equation and let $C_i=e_{i1}\ldots e_{im}$ be the cycle
corresponding to this equation in $H$ (and thus in $H'$).
Let $g_j$, $j=0,\ldots,m$, be the element of the group $G$
assigned to the vertex shared by the edges $e_{ij}$ and $e_{i,j+1}$.
In particular, $g_0=g_m$ and if $e_{ij}$ is empty, then $g_{j-1}=g_j$.
We infer from the choice of $x_i$ the following:
$$\prod_{j=1}^m x_{\sigma_i(j)}^{\epsilon_{i\sigma_i(j)}}=
  \prod_{j=1}^m g_{j-1}^{-1}g_j=
  g_0^{-1}g_1g_1^{-1}\cdots g_{m-1}g_{m-1}^{-1}g_m=g_0^{-1}g_m=1\;\mbox{.}$$
Hence, $x_i$'s are indeed a solution of the equation system.

On the other hand, each solution $x_i\in A_i$ gives rise
to $N$ edge--disjoint copies of $H$ in $H_0$. Indeed,
let $T$ be a spanning tree of $H$ such that the cycles $C_1,\ldots ,C_m$
corresponding to the equations of the system (\ref{eq:sysnonab})
are fundamental cycles with respect to $T$. Root $T$ at an arbitrarily
chosen vertex $v_0$. Set $g_{v_0}$ to an arbitrary element of $G$ and
define the values $g_v$
for other vertices of the graph $H$ as follows: if $v'$ is the parent
of $v$ in $T$ and the arc $vv'$ has color $i$ and
is oriented from $v$ to $v'$, then $g_v=g_{v'}x_i^{-1}$;
if the arc is oriented from $v'$ to $v$, then $g_v=g_{v'}x_i$.

Let $H'$ be the subgraph of $H_0$ with the vertices $(g_v,v)$
that contains the arc from $(g_v,v)$ to $(g_{v'},v')$ with color $k$
for every arc $vv'$ of $H$ with color $k$.
In order to be sure that $H'$ is properly defined, we have to verify
that an arc from $(g_v,v)$ to $(g_{v'},v')$ with the color $k$
is present in $H_0$. If $vv'$ is an arc of $T$, then $H_0$ contains
the arc from $(g_v,v)$ to $(g_{v'},v')$ by the definition of $g_v$.
If $vv'$ is not contained in $T$, there is exactly one equation
in the system that contains the variable $x_k$. We infer by a simple
manipulation from the definition of $g_v$ that $g_{v'}=g_vx_k$.
Since $x_k\in A_k$, the arc from $(g_v,v)$ to $(g_{v'},v')$
is contained in $H$ and its colors is $k$. Since the choice of $g_{v_0}$
was arbitrary, $H_0$ contains $N$ edge-disjoint copies of $H$.

The rest of the proof is the same as the last three paragraphs of
the proof of Theorem~\ref{thm-mainsys}.
\end{proof}

As a final remark, we briefly discuss the condition of graph
representability. The key point in the  proof of Theorem
\ref{thm-mainsys} is the correspondence between copies of $H$ and
solutions of the system: every copy of $H$ in the constructed graph
$H_0$ yields a solution of the system and every solution gives rise
to $N$ edge-disjoint copies of $H$.
This correspondence can be broken if the system is not graph
representable in the sense we have defined. For instance, in the
example from Figure~\ref{fig-ex}, it is possible to express only $2C$
as an integer combination of the base and thus the stated correspondence
need not exist for groups with elements of order two.

\section*{Acknowledgement}

This work was initiated at Spring School on Combinatorics 2007 held
in Vysok{\'a} L{\'\i}pa in April 2007 which was organized by
DIMATIA, the center for Discrete Mathematics Theoretical Computer
Science and Applications of Charles University in Prague.


\begin{thebibliography}{9}
\bibitem{alon04}
N. Alon, A. Shapira: Testing subgraphs in directed Graphs, Journal
of Computer and System Sciences 69(3) (2004), 354--382.
\bibitem{green05}
B. Green: A Szemer\'edi-type regularity lemma in abelian groups, with
applications, Geometric and Functional Analysis 15(2) (2005),
340--376.
\bibitem{simonovits}
J. Koml\'{o}s, M. Simonovits, Szemer\'edi's regularity lemma and its
applications in graph theory. Combinatorics, Paul Erd\H{o}s is
eighty, Vol.2 (Keszthely, 1993), 295-352, Bolyai Soc. Math. Stud.,
2, J\'anos bolyai Math Soc., Budapest, 1996.
\bibitem{liebchen}
C. Liebchen: Finding short integral cycle bases for cyclic timetabling,
in: Proc. ESA'03 (G. Battista, U. Zwick, eds.),
LNCS Vol. 2832, Springer, 2003, 715--726.
\end{thebibliography}
\end{document}